\documentclass[12pt,nosumlimits,nonamelimits]{amsart}

\topmargin 4mm
\evensidemargin 2mm
\oddsidemargin 2mm
\textwidth 162mm
\textheight 211mm

\usepackage{geometry}
\usepackage{amsmath,amsthm,amssymb,amsfonts}
\usepackage{mathrsfs}
\usepackage{bm}
\usepackage{tikz-cd}
\usepackage{setspace}
\usepackage{enumitem}
\usepackage{hyperref}
\usepackage{titlesec}
\usepackage{cite}
\usepackage{microtype}

\geometry{a4paper, margin=1in}
\setstretch{1.1}
\parindent=2em
\parskip=0.3em

\titleformat{\section}{\large\bfseries}{\thesection.}{0.5em}{}
\titleformat{\subsection}{\normalsize\bfseries}{\thesubsection.}{0.5em}{}

\theoremstyle{plain}
\newtheorem{theorem}{Theorem}[section]
\newtheorem{lemma}[theorem]{Lemma}

\theoremstyle{definition}
\newtheorem{definition}[theorem]{Definition}
\newtheorem{example}[theorem]{Example}

\theoremstyle{remark}

\newcommand{\C}{\mathbb{C}}

\DeclareMathOperator{\rk}{rank} 

\title{Non-linear traces of Choquet type on AF algebras}
\author{Ryota Ninomiya}

\address{
Department of Pure and Applied Mathematics,\\
Graduate School of Information Science and Technology,\\
The University of Osaka,\\
Yamadaoka 1--5, Suita, Osaka 565--0871, Japan
}

\email{ninomiya-r@ist.osaka-u.ac.jp}

\begin{document}
\pagenumbering{roman}

\maketitle

\begin{abstract}

We study non-linear traces of Choquet type on AF algebras.
Building on the characterization of Choquet traces on matrix algebras due to Nagisa--Watatani, we generalize the construction to arbitrary unital AF algebras.
We show that there is a one-to-one correspondence between such traces and increasing functions on the dimension scale, and we obtain explicit Choquet formulas in terms of the spectrum and ranks of spectral projections along a fixed AF filtration.

\end{abstract}

\pagenumbering{arabic}
\setstretch{1.2}

\section{Introduction}

The Choquet integral provides a flexible extension of the Lebesgue integral in which the underlying set function is allowed to be non-additive but monotone.  
In the finite-dimensional discrete setting, 
a capacity $\mu$ on a finite set $\Omega$ gives rise to a map on $[0,\infty)^\Omega$ which is characterized by three basic properties: monotonicity, positive homogeneity, and comonotonic additivity.  
This viewpoint has been extensively developed in the theory of non-additive measure and integral (see, for example, \cite{Choquet1953}, \cite{Choquet1954}, \cite{Denneberg1994}), and has found applications in decision theory, risk analysis, and probability under ambiguity.

In the operator algebraic setting, it is natural to ask whether one can develop analogous ``Choquet-type'' integration theories for positive elements of a C$^*$-algebra.  
A first step in this direction was taken by Nagisa and Watatani \cite{NagisaWatataniMatrix}, 
who introduced non-linear traces on full matrix algebras and studied them from the viewpoint of matrix analysis and majorization.  
In their framework, a non-linear trace on $M_n(\C)$ can be characterized as a Choquet map $\varphi : M_n(\C)^+ \to [0,\infty)$ which is unitarily invariant, monotone, positively homogeneous, and comonotonically additive with respect to the spectral functional calculus.
They proved that such maps admit explicit expressions in terms of the eigenvalues of a positive matrix and the ranks of its spectral projections, and that there is a one-to-one correspondence between the maps and the increasing functions on the finite dimension scale $\{0,1,\dots,n\}$.

Their results show that non-linear traces provide a natural non-commutative extension of discrete Choquet integrals to the matrix setting.  
On the one hand, every such trace is completely determined by its values on projections, and these values are encoded by an increasing function on the scale.  
On the other hand, given an increasing function on the scale, one obtains a non-linear trace via an explicit Choquet formula which involves eigenvalue gaps and the ranks of the corresponding spectral projections.
In particular, ordinary tracial states appear as the special case where the scale function is linear, 
that is, $\alpha(k)=k\alpha(1)$ for any $k\in \{0,1,\cdots, n\}$. 

Beyond the matrix case, Nagisa and Watatani have further extended the theory of non-linear traces of Choquet type to broader operator-algebraic settings.
In particular, they studied non-linear traces on the algebras of compact operators and on semi-finite factors, where eigenvalues are replaced by generalized singular values and spectral distribution functions \cite{NagisaWatataniCompact,NagisaWatataniSemifinite}.
More recently, they investigated probabilistic aspects of such traces, including a law of large numbers for non-linear traces of Choquet type on finite factors \cite{NagisaWatataniLLN}.
We refer the reader to these works for further generalizations of non-linear traces beyond the finite-dimensional framework considered in this paper.

We also note that several ideas in the present paper are inspired by Sogame’s master’s thesis\cite{Sogame2024}.
In particular, the viewpoint of parametrizing such traces by increasing maps on the dimension scale and some arguments in our AF-extension are adapted from that thesis.

The aim of this paper is to extend this picture from full matrix algebras to arbitrary unital AF algebras.  
For an AF algebra $A$, the ordered $K_0$-group and its distinguished scale $\Gamma(A)$ play the role of a global dimension scale which encodes the sizes of projections along an AF filtration. 

From a different viewpoint, the present work may also be regarded as a non-linear extension of the correspondence between traces on AF algebras and order-preserving states on the ordered $K_0$-group.
In the linear setting, it is well known that tracial states on an AF algebra are described in terms of the ordered $K_0$-group and its scale, see for example Davidson's book \cite{Davidson1996} (see also \cite{EffrosHandelmanShen1980}, \cite{GoodearlHandelman1976}) and the work of Blackadar on traces on simple AF $C^*$-algebras \cite{Blackadar1980TracesSimpleAF}.
Motivated by this perspective, we regard $\Gamma(A)$ as the natural parameter space for non-linear traces of Choquet type on a unital AF algebra.

More precisely, we consider maps $\varphi : A^+ \to [0,\infty)$ satisfying the following properties:
\begin{itemize}
  \item unitary invariance:
        $\varphi(uau^*) = \varphi(a)$ for all $a \in A^+$ and all unitaries
        $u \in A$;
  \item monotonicity:
        $0 \le a \le b$ implies $\varphi(a) \le \varphi(b)$;
  \item positive homogeneity:
        $\varphi(t a) = t\,\varphi(a)$ for all $t \ge 0$ and $a \in A^+$;
  \item comonotonic additivity on the spectrum:
        for each $a \in A^+$ and comonotonic functions
        $f,g \in C(\sigma(a))^+$, we have
        $\varphi(f(a)+g(a)) = \varphi(f(a)) + \varphi(g(a))$.
\end{itemize}
We call such maps \emph{Choquet traces} on $A$.  
When $A = M_n(\C)$, this notion reduces to the non-linear traces of Nagisa--Watatani \cite{NagisaWatataniMatrix}.  
In the AF case, the extra structure coming from the inductive limit decomposition allows one to express Choquet traces in terms of spectral data inside each finite-dimensional building block and an increasing function on the global dimension scale.

The main result of this paper shows that there is a bijective correspondence between Choquet traces on a unital AF algebra $A$ and increasing functions on $\Gamma(A)$ vanishing at $0$.  
More precisely, given an increasing function $\alpha : \Gamma(A) \to [0,\infty)$ with $\alpha(0)=0$, we construct a Choquet trace $\varphi_\alpha$ on $A$ which, on each finite-dimensional subalgebra $A_n \cong \bigoplus_s M_{k_{n,s}}(\C)$, admits an explicit Choquet-type formula in terms of the spectrum of $a \in A_n^+$ and the rank vectors of its spectral projections.  Conversely, every Choquet trace on $A$ arises uniquely in this way, and is determined by its values on projections via the associated function $\alpha$ on the scale.

The structure of the paper is as follows.
In Section~2, we review the discrete Choquet integral on a finite set and introduce the notion of a Choquet trace on a unital C$^*$-algebra.
In Section~3, we extend the theory of Nagisa--Watatani on Choquet traces from a single matrix algebra to finite direct sums of matrix algebras. 
Section~4 is devoted to the AF case: we recall the description of the dimension scale $\Gamma(A)$ of an AF algebra in terms of its inductive limit decomposition, construct Choquet traces from increasing functions on $\Gamma(A)$, and prove the characterization.
In Section~5, we illustrate the general theory by discussing concrete examples, including UHF algebras and the Fibonacci AF algebra, 
where the dimension scale and its order structure can be described explicitly.

\section{Choquet integrals and Choquet traces}

We begin by recalling the discrete Choquet integral on a finite set,
which will serve as a guiding model for the non-commutative constructions considered later.
Let $\Omega=\{1,\dots,n\}$ and 
\[
   \mu:\mathcal P(\Omega)\to[0,\infty)
\]
be an increasing set function, that is,
\[
   X\subset Y\subset\Omega \ \Longrightarrow\ \mu(X)\le\mu(Y),
   \qquad
   \mu(\emptyset)=0.
\]
Such a map is usually called a monotone measure or capacity.
For a vector $x=(x_1,\dots,x_n)\in[0,\infty)^n$, choose a permutation $\sigma$ of $\{1,\dots,n\}$ such that
\[
   x_{\sigma(1)}\ge x_{\sigma(2)}\ge\cdots\ge x_{\sigma(n)}.
\]
Set
\[
   A_i:=\{\sigma(1),\dots,\sigma(i)\},
   \qquad
   1\le i\le n.
\]
The discrete Choquet integral of $x$ with respect to $\mu$ is defined by
\[
   (C)\!\int x\,d\mu
     =
     \sum_{i=1}^{n-1}
        \bigl(x_{\sigma(i)}-x_{\sigma(i+1)}\bigr)\,\mu(A_i)
     + x_{\sigma(n)}\,\mu(A_n).
\]
This value does not depend on the choice of the permutation $\sigma$ that
orders the components in decreasing order.

Real-valued functions $f$ and $g$ on a set $\Omega$ are called \emph{comonotonic} functions if for all $s,t\in \Omega$
 \[
 (f(s)-f(t))
 (g(s)-g(t))
 \geq0.
 \]

The Choquet integral satisfies the following three properties.

\begin{itemize}
\item \textbf{Monotonicity:}
      if $0\le f\le g$, then
      $(C)\!\int f\,d\mu\le (C)\!\int g\,d\mu$.
\item \textbf{Positive homogeneity:}
      if $r\ge0$, then
      $(C)\!\int rf\,d\mu
        = r(C)\!\int f\,d\mu$.
\item \textbf{Comonotonic additivity:}
      if $f$ and $g$ are comonotonic, then
      \[
        (C)\!\int(f+g)\,d\mu
          = (C)\!\int f\,d\mu + (C)\!\int g\,d\mu.
      \]
\end{itemize}

Conversely, every map on $[0,\infty)^n$ that satisfies these three properties is a Choquet integral with respect to a unique capacity.
Thus the Choquet integral is completely characterized by properties of monotonicity, positive homogeneity, and comonotonic additivity.

\medskip

The characterization above suggests that the Choquet integral can be
understood abstractly as a map determined by monotonicity,
positive homogeneity, and comonotonic additivity.
Our aim is to formulate and study a non-commutative analogue of this notion
on positive elements of a C$^*$-algebra.

In the non-commutative setting, spectral data of positive elements are naturally reflected in the ordered $K_0$-theory.
Throughout the paper, for a unital C$^*$-algebra $A$ we write $K_0(A)$ for the ordered $K$-group with positive cone $K_0(A)^+$ and distinguished order unit $[1_A]\in K_0(A)^+$.

We define the \emph{dimension scale} of $A$ by
\[
  \Gamma(A):=\{\, [p]\in K_0(A)^+ : p\in A \text{ is a projection}\,\}.
\]

Motivated by Nagisa-Watatani's theory, we now introduce the notion of a Choquet trace on a C$^*$-algebra.

\begin{definition}\label{def:Choquet-trace}
Let $A$ be a unital C$^*$-algebra. 
A map $\varphi:A^+\to[0,\infty)$ is said to be a \emph{Choquet trace} if it satisfies the following properties:
\begin{itemize}
\item[(U)] \textbf{Unitary invariance}: 
for every $a\in A^+$ and every unitary $u\in A$,
\[
   \varphi(uau^*)=\varphi(a).
\]
\item[(M)] \textbf{Monotonicity}:
if $a,b\in A^+$ and $a\le b$, 
then $\varphi(a)\le\varphi(b)$.
\item[(H)] \textbf{Positive homogeneity}:
if $r\ge0$ and $a\in A^+$, then
\[
   \varphi(ra)=r\,\varphi(a).
\]
\item[(C)] \textbf{Comonotonic additivity on the spectrum}:
for each $a\in A^+$ and any continuous functions $f,g\in C(\sigma(a))^+$ that are comonotonic on $\sigma(a)$,
\[
   \varphi(f(a)+g(a))
     = \varphi(f(a))+\varphi(g(a)).
\]
Here $f(a)$ and $g(a)$ are defined by continuous functional calculus.
\end{itemize}
\end{definition}

The properties (U), (M), (H), and (C) form an exact non-commutative analogue of the defining properties of the discrete Choquet integral on a finite set. 
Property (C) expresses comonotonic additivity along the spectrum of a positive element via functional calculus.

The following lemma is proved in \cite{Sogame2024}.
We include a proof for the reader's convenience.

\begin{lemma}\label{lem:choquet-Lipschitz}
Let $A$ be a unital C$^*$-algebra and $\varphi:A^+\to[0,\infty)$ be a Choquet trace with $\varphi(0)=0$.
Then for all $a,b\in A^+$,
\[
  |\varphi(a)-\varphi(b)|
  \le \varphi(1_A)\,\|a-b\|.
\]
In particular, $\varphi$ is norm continuous on $A^+$.
\end{lemma}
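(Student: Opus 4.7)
The plan is to reduce the Lipschitz bound to an inequality of self-adjoint elements which can be handled by monotonicity (M), after first using comonotonic additivity (C) to compute $\varphi$ on shifts of the form $b+c\cdot 1_A$.

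The first step is to observe that for any $b\in A^+$ and any $c\ge 0$, writing $f(t)=t$ and $g(t)=c$ on $\sigma(b)$, the functions $f$ and $g$ are comonotonic (any function is comonotonic with a constant function, since one factor in the defining product vanishes). Applying (C) and then (H) yields
\[
  \varphi(b+c\cdot 1_A)=\varphi(b)+\varphi(c\cdot 1_A)=\varphi(b)+c\,\varphi(1_A).
\]

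The second step is the elementary self-adjoint inequality $a-b\le \|a-b\|\,1_A$, which holds because $a-b$ is self-adjoint with spectrum contained in $[-\|a-b\|,\|a-b\|]$. Rearranging gives $a\le b+\|a-b\|\cdot 1_A$ in the order of self-adjoint elements, and since both sides lie in $A^+$, monotonicity (M) combined with the identity from the previous step gives
\[
  \varphi(a)\le \varphi\!\bigl(b+\|a-b\|\,1_A\bigr)=\varphi(b)+\|a-b\|\,\varphi(1_A).
\]
Swapping the roles of $a$ and $b$ produces the reverse inequality, and the two together give the desired bound.

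There is no real obstacle here; the only point requiring care is noticing that the identity function and a constant function are comonotonic on $\sigma(b)$, so that (C) applies to the decomposition $b+c\cdot 1_A=\mathrm{id}(b)+c\cdot \mathbf 1(b)$. Norm continuity on $A^+$ is then immediate from the Lipschitz estimate.
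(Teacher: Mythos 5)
Your proof is correct and follows essentially the same route as the paper's: both use monotonicity on the inequality $a\le b+\|a-b\|\,1_A$ together with the identity $\varphi(b+c\,1_A)=\varphi(b)+c\,\varphi(1_A)$ obtained from (C) and (H). Your write-up just makes explicit the (correct) observation that the identity function and a constant are comonotonic on $\sigma(b)$, which the paper leaves implicit.
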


\begin{proof}
Let $a,b\in A^+$ and put $\varepsilon:=\|a-b\|$.
Since $a-b$ is self-adjoint,
\[
  -\varepsilon 1_A \le a-b \le \varepsilon 1_A,
\]
and hence
\[
  a \le b+\varepsilon 1_A,
  \qquad
  b \le a+\varepsilon 1_A.
\]
By monotonicity,
\[
  \varphi(a)\le \varphi(b+\varepsilon 1_A),
  \qquad
  \varphi(b)\le \varphi(a+\varepsilon 1_A).
\]
The properties (C) and (H) imply that
\[
  \varphi(b+\varepsilon 1_A)
  = \varphi(b) + \varepsilon\varphi(1_A).
\]
Combining these inequalities yields the claim.
\end{proof}

We now recall Nagisa-Watatani's theory for the matrix algebra $A=M_n(\C)$ in \cite{NagisaWatataniMatrix}. 
Since every projection is Murray--von Neumann equivalent to a diagonal projection of a given rank, we may identify
\[
  \Gamma(A)
  \;\cong\;
  \{0,1,\dots,n\},
\]
where $i \in \{0,\dots,n\}$ corresponds to the $K_0$-class of rank-$i$ projections.
For notational convenience, we will simply write
\[
  \Gamma(A) = \{0,1,\dots,n\}
\]
in what follows.

Let 
\[
\mu_1(a)\ge \mu_2(a)\ge\cdots\ge \mu_n(a)
\]
be the eigenvalues of $a$ in decreasing order with counting multiplicity. 
Put
\[
  q_j(a) := \rk\bigl(E_a([\mu_j(a),\infty))\bigr) \in \{0,1,\dots,n\}, 
\]
where $E_a([t,\infty))$ is the spectral projection of $a$ corresponding to the interval $[t,\infty)$ for any $t\ge 0$. 
Define a map $\varphi_\alpha:A^+\to[0,\infty)$ by 
\[
  \varphi_\alpha(a)
  = \sum_{j=1}^{n}
      \bigl(\mu_j(a) - \mu_{j+1}(a)\bigr)\alpha(q_j(a)),
\]
where $\mu_{n+1}(a)=0$. 
We call $\varphi_\alpha$ the \emph{Choquet formula} associated with $\alpha$. 
This is defined and characterized in \cite{NagisaWatataniMatrix}, that is, 
the map $\varphi_\alpha$ is a Choquet trace, and conversely every Choquet trace on a matrix algebra can be represented in this form.  
There is a one-to-one correspondence between Choquet traces on $M_n(\C)$ and increasing functions on $\Gamma(A)$ that vanish at $0$.

\section{Non-linear traces of Choquet type on finite direct sums of matrix algebras}
\label{sec:direct-sum}

In this section, we extend the Nagisa-Watatani's theory from a single matrix algebra to a finite direct sum
\[
  A \;=\; \bigoplus_{s=1}^L M_{k_s}(\C).
\]
The dimension scale $\Gamma(A)$ is identified with the product
\[
  \Gamma(A)
  \;\cong\;
  \{0,1,\dots,k_1\}\times\cdots\times\{0,1,\dots,k_L\}. 
\]
We will henceforth use this identification and simply write
\[
  \Gamma(A)
  =
  \{0,1,\dots,k_1\}\times\cdots\times\{0,1,\dots,k_L\}.
\]
For a projection $p = \bigoplus_{s=1}^L p_s\in A$, we define its rank vector by
\[
  \rk(p)
  :=
  \bigl(\rk(p_1),\dots,\rk(p_L)\bigr)
  \in \Gamma(A).
\]

As in Section 2, we define the Choquet trace and the Choquet formula on direct sums of matrix algebras in this section.

\begin{definition}\label{def:Choquet-formula}
For $a\in A$, let
  \[
  \lambda_1(a) > \lambda_2(a) > \cdots > \lambda_{K_a}(a) \ge 0
\]
be the distinct points of the spectrum $\sigma(a)$ listed in decreasing order with $K_a=\#\sigma(a)$. 
Let $\alpha:\Gamma(A)\to[0,\infty)$ be an increasing function such that $\alpha(0)=0$. 
We define a map $\varphi_\alpha:A^+\to[0,\infty)$ by
\[
  \varphi_\alpha(a)
  = \sum_{j=1}^{K_a}
      \bigl(\lambda_j(a) - \lambda_{j+1}(a)\bigr)\alpha(r_j(a)),
\]
where $\lambda_{K_a+1}(a)=0$ and 
\[
  r_i(a):=[E_a([\lambda_i(a),\infty))]\in\Gamma(A).
\]

\end{definition}

The Choquet formula in Definition~\ref{def:Choquet-formula} expresses $\varphi_\alpha(x)$ in terms of the jumps of the spectrum of $x$.
For later arguments, especially for the monotonicity of $\varphi_\alpha$, it is convenient to allow an arbitrary finite decreasing sequence $t_1>\cdots>t_m\ge0$ such that $\sigma(x)\subset\{t_1,\dots,t_m\}$.
The following lemma shows that we obtain the same value by replacing the spectral values with such a grid and the corresponding spectral projections $E_x([t_i,\infty))$.

\begin{lemma}\label{lem:proj-general}
Let $a \in A^+$, and
\[
  t_1 > t_2 > \cdots > t_m \ge 0
\]
be any finite decreasing sequence such that
\[
  \sigma(a)\subset\{t_1,\dots,t_m\},
\]
and put $t_{m+1}:=0$.
Then
\[
  \varphi_\alpha(a)
  =
  \sum_{i=1}^m
    \bigl(t_i-t_{i+1}\bigr)\,\alpha\bigl(r(a;t_i)\bigr), 
\]
where $r(a;t):=\rk\bigl(E_a([t,\infty))\bigr)\in\Gamma(A)$. 
\end{lemma}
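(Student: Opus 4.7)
The plan is to reduce the given sum to the defining sum of $\varphi_\alpha(a)$ by exploiting the piecewise-constant behaviour of the map $t\mapsto r(a;t)$. The key observation is that for any $t\in(\lambda_{j+1}(a),\lambda_j(a)]$ with the convention $\lambda_{K_a+1}(a):=0$, the spectral projections $E_a([t,\infty))$ and $E_a([\lambda_j(a),\infty))$ coincide, because the intervals $[t,\infty)$ and $[\lambda_j(a),\infty)$ meet $\sigma(a)$ in the same finite set $\{\lambda_1(a),\dots,\lambda_j(a)\}$. Consequently $r(a;t)=r_j(a)$ on each such interval. Similarly, for $t>\lambda_1(a)$ the projection $E_a([t,\infty))$ vanishes, so $r(a;t)=0$ and the corresponding summand is zero because $\alpha(0)=0$.

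The next step is to organise the grid $t_1>\cdots>t_m\ge 0$ according to where each $t_i$ sits relative to the spectrum. Since $\sigma(a)\subset\{t_1,\dots,t_m\}$, each distinct eigenvalue is itself a grid point: write $\lambda_j(a)=t_{i_j}$ with $1\le i_1<i_2<\cdots<i_{K_a}\le m$, and set $i_{K_a+1}:=m+1$, so that $t_{i_{K_a+1}}=t_{m+1}=0$. Because every spectral value lies in the grid, no spectral value can sit strictly between two consecutive grid points. Hence every index $i$ with $i_j\le i<i_{j+1}$ satisfies $t_i\in(\lambda_{j+1}(a),\lambda_j(a)]$, and the first step yields $\alpha(r(a;t_i))=\alpha(r_j(a))$. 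For $i<i_1$, which corresponds to grid points lying strictly above $\lambda_1(a)$, the term vanishes.

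Finally, I would group the sum block by block and telescope within each block. The contribution of block $j$ is
\[
\alpha(r_j(a))\sum_{i=i_j}^{i_{j+1}-1}(t_i-t_{i+1})
= \alpha(r_j(a))\bigl(t_{i_j}-t_{i_{j+1}}\bigr)
= \alpha(r_j(a))\bigl(\lambda_j(a)-\lambda_{j+1}(a)\bigr),
\]
and summing over $j=1,\dots,K_a$ recovers the defining expression for $\varphi_\alpha(a)$. The argument is essentially bookkeeping; the only points requiring mild care are the edge cases where the grid extends above $\lambda_1(a)$ or contains several points between two consecutive eigenvalues, both of which are absorbed by $\alpha(0)=0$ and by the piecewise-constant observation above.
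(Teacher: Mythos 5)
Your proposal is correct and follows essentially the same route as the paper's proof: both arguments observe that $t\mapsto r(a;t)$ is constant between consecutive spectral values, dispose of grid points above $\lambda_1(a)$ via $\alpha(0)=0$, and telescope the sum block by block to recover $\sum_j(\lambda_j(a)-\lambda_{j+1}(a))\alpha(r_j(a))$. The only difference is cosmetic bookkeeping (you index blocks by the positions $i_j$ of the eigenvalues in the grid, while the paper assigns to each grid index $i$ the spectral index $j(i)$).
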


\begin{proof}
Put $K_a:=\#\sigma(a)$, and list the distinct points of $\sigma(a)$ in decreasing order as
\[
  \lambda_1(a)>\cdots>\lambda_{K_a}(a)\ge 0.
\]
For each $i$, define
\[
j(i):=\max\{\,j\in\{1,\dots,K_a\}\mid \lambda_j(a)\ge t_i\,\},
\]
with the convention that $j(i)=0$ if the set is empty (equivalently, if $t_i>\lambda_1(a)$).
Put $r_0(a):=0\in\Gamma(A)$.
If $j(i)=0$, then $t_i>\lambda_1(a)=\| a \|$, and therefore
\[
  E_a([t_i,\infty))=0
  \quad \text{and} \quad
  r(a;t_i)=0=r_0(a).
\]
If $j(i)\ge1$, then there is no spectral value of $a$ in the open interval
$(\lambda_{j(i)+1}(a),\lambda_{j(i)}(a))$, hence
\[
  E_a([t_i,\infty))
  = E_a([\lambda_{j(i)}(a),\infty)),
\]
and therefore
\[
  r(a;t_i)=r_{j(i)}(a).
\]
Hence
\[
\begin{aligned}
  \sum_{i=1}^m
    \bigl(t_i-t_{i+1}\bigr)\,\alpha\bigl(r(a;t_i)\bigr)
  &= \sum_{i=1}^m
       \bigl(t_i-t_{i+1}\bigr)\,
       \alpha\bigl(r_{j(i)}(a)\bigr) \\
  &= \sum_{j=1}^{K_a}
       \Bigl(
         \sum_{i:\,j(i)=j}
           (t_i-t_{i+1})
       \Bigr)\,
       \alpha\bigl(r_j(a)\bigr) \\
  &= \sum_{j=1}^{K_a}
       \Bigl(
         \lambda_j(a)-\lambda_{j+1}(a)
       \Bigr)\,
       \alpha\bigl(r_j(a)\bigr) \\
  &= \varphi_\alpha(a)
\end{aligned}
\]
\end{proof}

The following theorem shows that the map $\varphi_\alpha$ defined by Definition~\ref{def:Choquet-formula} is a Choquet trace on the finite-dimensional C$^*$-algebra $A=\bigoplus_{s=1}^L M_{k_s}(\C)$.

\begin{theorem}\label{thm:direct-sum-properties}
The map $\varphi_\alpha : A^+ \to [0,\infty)$ is a Choquet trace on $A$. 
\end{theorem}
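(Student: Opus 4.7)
I would verify the four axioms (U), (H), (M), (C) separately. Properties (U) and (H) are essentially bookkeeping. For (U), every unitary $u \in A$ is block diagonal, $u = \bigoplus_s u_s$, so $E_{uau^*}([t,\infty)) = u E_a([t,\infty)) u^*$ has the same rank vector as $E_a([t,\infty))$; combined with the invariance of the spectrum under unitary conjugation, this gives $\varphi_\alpha(uau^*) = \varphi_\alpha(a)$. For (H) with $r > 0$, the spectrum scales as $\sigma(ra) = r\sigma(a)$ and $E_{ra}([t,\infty)) = E_a([t/r,\infty))$, so the gaps $\lambda_j - \lambda_{j+1}$ scale by $r$ while the rank vectors $r_j$ are unchanged; the case $r = 0$ uses the convention $\alpha(0) = 0$.

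For (M), I would bypass the fact that $\sigma(a)$ and $\sigma(b)$ need not agree by applying Lemma~\ref{lem:proj-general} on a common grid. Fix any strictly decreasing sequence $t_1 > \cdots > t_m \ge 0$ with $\sigma(a) \cup \sigma(b) \subset \{t_1,\dots,t_m\}$. On each summand $M_{k_s}(\C)$, the inequality $a_s \le b_s$ gives the classical Weyl/min--max comparison $\mu_j(a_s) \le \mu_j(b_s)$, equivalently $\rk E_{a_s}([t,\infty)) \le \rk E_{b_s}([t,\infty))$ for every $t > 0$. Hence $r(a;t_i) \le r(b;t_i)$ componentwise in $\Gamma(A)$, and monotonicity of $\alpha$ together with Lemma~\ref{lem:proj-general} yields $\varphi_\alpha(a) \le \varphi_\alpha(b)$.

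The main work is in (C). Write the spectral decomposition $a = \sum_{i=1}^K \lambda_i p_i$ with $\lambda_1 > \cdots > \lambda_K$ and mutually orthogonal spectral projections $p_i$. Given comonotonic $f,g \in C(\sigma(a))^+$, set $c_i := f(\lambda_i)$ and $d_i := g(\lambda_i)$, and reindex $i$ so that both $(c_i)$ and $(d_i)$ are weakly decreasing: first sort by $c$, then within each $c$-level sort by $d$, the remaining cross-level inequalities for $d$ being ensured by comonotonicity. Setting $E_i := p_1 + \cdots + p_i$, apply Lemma~\ref{lem:proj-general} to $f(a)$ with grid given by the strictly decreasing sub-sequence of distinct $c$-values, and observe that tied indices contribute zero, to obtain
\[
  \varphi_\alpha(f(a)) \;=\; \sum_{i=1}^K (c_i - c_{i+1})\,\alpha(\rk E_i),
\]
with $c_{K+1} := 0$. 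The analogous formulas hold for $g(a)$ and for $(f+g)(a)$, since $(c_i + d_i)$ is again weakly decreasing. A termwise telescoping sum then gives $\varphi_\alpha(f(a) + g(a)) = \varphi_\alpha(f(a)) + \varphi_\alpha(g(a))$.

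The principal obstacle is the bookkeeping in (C): producing the simultaneous monotone reindexing from comonotonicity and verifying that, despite possible ties in the three decreasing sequences $(c_i)$, $(d_i)$, and $(c_i + d_i)$, the same nested family $E_i = p_1 + \cdots + p_i$ correctly computes all three Choquet formulas through Lemma~\ref{lem:proj-general}. Once this is done, comonotonic additivity reduces to the linearity of a telescoping sum.
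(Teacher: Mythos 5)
Your proposal is correct and follows essentially the same route as the paper: (U) and (H) by direct computation, (M) via Weyl monotonicity plus a common grid fed into Lemma~\ref{lem:proj-general}, and (C) by extracting from comonotonicity a common monotone reindexing so that one nested family of spectral projections computes all three Choquet formulas, after which additivity is a termwise telescoping identity. The paper phrases the key step in (C) as the equality $E_{(f+g)(a)}\bigl([(f+g)(\lambda_{\pi(j)}),\infty)\bigr)=E_{f(a)}\bigl([f(\lambda_{\pi(j)}),\infty)\bigr)$ rather than via your cumulative projections $E_i$, but this is the same observation in different bookkeeping.
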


\begin{proof}
We check the four properties.

\noindent
(U)  
For any positive element $a$ and a unitary element $u$ in $A$, 
we have 
\[
\sigma(a)=\sigma(uau^*), \qquad E_{uau^*}([t,\infty))=uE_{a}([t,\infty))u^*. 
\]
Hence we obtain 
\[
\lambda_j(a)=\lambda_j(uau^*), \qquad r_j(uau^*)=r_j(a). 
\]
Therefore the property (U) holds. 

\noindent
(H)  
Let $t\ge0$ and $a\in A^+$. 
For any $j$ and $s>0$, 
\[
\lambda_j(t a)=t\lambda_j(a), \qquad E_{t a}([s,\infty)) = E_a([s/t,\infty)). 
\]
Thus the rank vectors satisfy
\[
  r_j(t a) = r_j(a),
  \qquad
  \lambda_j(t a) - \lambda_{j+1}(t a)
    = t\bigl(\lambda_j(a) - \lambda_{j+1}(a)\bigr),
\]
and the Choquet formula gives
\[
  \varphi_\alpha(t a)
  = \sum_{j=1}^{K_a}
       \bigl(\lambda_j(t a) - \lambda_{j+1}(t a)\bigr)\,
       \alpha\bigl(r_j(t a)\bigr)
  = t \sum_{j=1}^{K_a}
       \bigl(\lambda_j(a) - \lambda_{j+1}(a)\bigr)\,
       \alpha\bigl(r_j(a)\bigr)
  = t\,\varphi_\alpha(a).
\]

\noindent
(C)
Let $a\in A^+$ and $f,g\in C(\sigma(a))^+$ be comonotonic on $\sigma(a)$.
For simplicity, we denote $\lambda_j(a)$ by $\lambda_j$ for any $j$. 
By comonotonicity on the finite set $\sigma(a)$,
there exists a permutation $\pi$ of $\{1,\dots,K\}$ such that
\[
  f(\lambda_{\pi(1)})\ge\cdots\ge f(\lambda_{\pi(K_a)})\ge0,
  \qquad
  g(\lambda_{\pi(1)})\ge\cdots\ge g(\lambda_{\pi(K_a)})\ge0.
\]
Put
\[
  f(\lambda_{\pi(K_{a}+1)})=g(\lambda_{\pi(K_{a}+1)})=0.
\]

Let $h$ be a positive element in $C(\sigma(a))$ such that
\[
  h(\lambda_{\pi(1)}(a)) \ge \cdots \ge h(\lambda_{\pi(K_a)}(a))
\]
with the convention $h(\lambda_{\pi(K_a+1)}(a))=0$. 
Since we have $\sigma(h(a)) = h(\sigma(a))$, we obtain
\begin{align}
  \varphi_\alpha(h(a))
  = \sum_{j=1}^{K_a}
     \bigl( h(\lambda_{\pi(j)}(a)) - h(\lambda_{\pi(j+1)}(a)) \bigr)
     \alpha\bigl(E_{h(a)}([h(\lambda_{\pi(j)}(a)), \infty))\bigr). 
  \tag{1}
\end{align}

For any $1 \le i \le K_{h(a)}$, put
\[
  j_h(i)
  := \max\{ j \in \{1,\dots,K_a\} \mid \lambda_i(h(a)) = h(\lambda_{\pi(j)}(a)) \},
  \qquad j_h(0) = 0.
\]
Then we have
\[
  E_{h(a)}([h(\lambda_{\pi(j_h(i))}(a)), \infty))
  = E_{h(a)}([\lambda_i(h(a)), \infty))
  = \sum_{j=1}^{j_h(i)} E_a(\{\lambda_{\pi(j)}(a)\}),
\]
and
\begin{align*}
  &\sum_{j=1}^{K_a}
    \bigl( h(\lambda_{\pi(j)}(a)) - h(\lambda_{\pi(j+1)}(a)) \bigr)\beta(j) 
    \\
  &= \sum_{i=1}^{K_{h(a)}}
     \bigl( h(\lambda_{j_h(i)}(a)) - h(\lambda_{j_h(i+1)}(a)) \bigr)
     \beta(j_h(i))  
\end{align*}
for any function $\beta$ on $\{1,\dots,K_{h(a)}\}$.

Using (1), we obtain
\begin{align*}
  \varphi_\alpha((f+g)(a))
  &= \sum_{j=1}^{K_a}
     \bigl( (f+g)(\lambda_{\pi(j)}(a)) - (f+g)(\lambda_{\pi(j+1)}(a)) \bigr) \\
  &\qquad \times
     \alpha\!\left(
       \rk\!\left(
         E_{(f+g)(a)}([(f+g)(\lambda_{\pi(j)}(a)), \infty))
       \right)
     \right) \\
  &= \sum_{j=1}^{K_a}
     \bigl( f(\lambda_{\pi(j)}(a)) - f(\lambda_{\pi(j+1)}(a)) \bigr)
     \alpha\!\left(
       \rk\!\left(
         E_{(f+g)(a)}([(f+g)(\lambda_{\pi(j)}(a)), \infty))
       \right)
     \right) \\
  &\quad
     + \sum_{j=1}^{K_a}
       \bigl( g(\lambda_{\pi(j)}(a)) - g(\lambda_{\pi(j+1)}(a)) \bigr)
       \alpha\!\left(
         \rk\!\left(
           E_{(f+g)(a)}([(f+g)(\lambda_{\pi(j)}(a)), \infty))
         \right)
       \right). 
\end{align*}

We claim that for each $j\in\{1,\dots,K_a\}$,
\[
  E_{(f+g)(a)}\!\left(\bigl[(f+g)(\lambda_{\pi(j)}(a)),\infty\bigr)\right)
  =
  E_{f(a)}\!\left(\bigl[f(\lambda_{\pi(j)}(a)),\infty\bigr)\right).
\]
Indeed, since $f$ and $g$ are comonotone on $\sigma(a)$ and are ordered by the same permutation $\pi$, we have for every $k$,
\[
  (f+g)(\lambda_{\pi(k)}(a))\ge (f+g)(\lambda_{\pi(j)}(a))
  \iff
  f(\lambda_{\pi(k)}(a))\ge f(\lambda_{\pi(j)}(a)).
\]
Hence,
\[
  \{\lambda\in\sigma(a)\mid (f+g)(\lambda)\ge (f+g)(\lambda_{\pi(j)}(a))\}
  =
  \{\lambda\in\sigma(a)\mid f(\lambda)\ge f(\lambda_{\pi(j)}(a))\},
\]
and by the spectral decomposition
$E_{h(a)}([t,\infty))=\sum_{\lambda\in\sigma(a),\,h(\lambda)\ge t}E_a(\{\lambda\})$
for $h\in C(\sigma(a))$, we obtain
\[
  E_{(f+g)(a)}\!\left(\bigl[(f+g)(\lambda_{\pi(j)}(a)),\infty\bigr)\right)
  =
  E_{f(a)}\!\left(\bigl[f(\lambda_{\pi(j)}(a)),\infty\bigr)\right).
\]

Thus, considering only the $f$-part, we have
\begin{align*}
  &\sum_{j=1}^{K_a}
    \bigl( f(\lambda_{\pi(j)}(a)) - f(\lambda_{\pi(j+1)}(a)) \bigr)
    \alpha\!\left(
      \rk\!\left(
        E_{(f+g)(a)}([(f+g)(\lambda_{\pi(j)}(a)), \infty))
      \right)
    \right) \\
  &= \sum_{i=1}^{K_{f(a)}}
     \bigl(
       f(\lambda_{\pi(j_f(i))}(a))
       - f(\lambda_{\pi(j_f(i)+1)}(a))
     \bigr)
     \alpha\!\left(
       \rk\!\left(
         E_{(f+g)(a)}([(f+g)(\lambda_{\pi(j_f(i))}(a)), \infty))
       \right)
     \right) \\
  &= \sum_{i=1}^{K_{f(a)}}
     \bigl(
       f(\lambda_{\pi(j_f(i))}(a))
       - f(\lambda_{\pi(j_f(i)+1)}(a))
     \bigr)
     \alpha\!\left(
       \rk\!\left(
         E_{f(a)}([f(\lambda_{\pi(j_f(i))}(a)), \infty))
       \right)
     \right) \\
  &= \sum_{j=1}^{K_a}
     \bigl(
       f(\lambda_{\pi(j)}(a))
       - f(\lambda_{\pi(j+1)}(a))
     \bigr)
     \alpha\!\left(
       \rk\!\left(
         E_{f(a)}([f(\lambda_{\pi(j)}(a)), \infty))
       \right)
     \right) \\
  &= \varphi_\alpha(f(a)).
\end{align*}
Similarly, 
\begin{align*}
&\sum_{j=1}^{K_a}
    \bigl( g(\lambda_{\pi(j)}(a)) - g(\lambda_{\pi(j+1)}(a)) \bigr)
    \alpha\!\left(
      \rk\!\left(
        E_{(f+g)(a)}([(f+g)(\lambda_{\pi(j)}(a)), \infty))
      \right)
    \right) \\
&= \varphi_\alpha(g(a)), 
\end{align*}
and hence the map $\varphi_\alpha$ satisfies
\begin{align*}
\varphi_\alpha(f(a)+g(a))=\varphi_\alpha(f(a))+\varphi_\alpha(g(a)).
\end{align*}

\noindent
(M)
Let $0\le a\le b$ in $A^+$ such that 
\[
  a=\bigoplus_{s=1}^L a_s,
  \qquad
  b=\bigoplus_{s=1}^L b_s,
\]
with $a_s,b_s\in M_{k_s}(\C)^+$.
Then $0\le a_s\le b_s$ for each $1\le s\le L$.

Let
\[
  \mu^{(s)}_1(a_s)\ge\cdots\ge\mu^{(s)}_{k_s}(a_s),
  \qquad
  \mu^{(s)}_1(b_s)\ge\cdots\ge\mu^{(s)}_{k_s}(b_s)
\]
be the eigenvalues of $a_s$ and $b_s$, respectively, listed in decreasing order
with counting multiplicity.
By Weyl's monotonicity theorem (see \cite{Bhatia1997}), we have
\[
  \mu^{(s)}_i(a_s)\le \mu^{(s)}_i(b_s)
  \qquad(1\le i\le k_s,\ 1\le s\le L).
\]

Then, for each $1 \le s \le L$ and $t \ge 0$, 
\[
  \rk\bigl(E_{a_s}([t,\infty))\bigr)
  =
  \#\{\,i\mid \mu^{(s)}_i(a_s)\ge t\,\}
  \le
  \#\{\,i\mid \mu^{(s)}_i(b_s)\ge t\,\}
  =
  \rk\bigl(E_{b_s}([t,\infty))\bigr).
\]
Hence the rank vectors satisfy
\[
  \rk\bigl(E_a([t,\infty))\bigr)
  \le
  \rk\bigl(E_b([t,\infty))\bigr)
  \quad\text{in }\Gamma(A)
  \qquad(t\ge0).
\]

Let
\[
  T:=\sigma(a)\cup\sigma(b)\cup\{0\},
\]
and list its elements in decreasing order as
\[
  t_1>t_2>\cdots>t_m\ge0,
  \qquad t_{m+1}:=0.
\]
Applying Lemma~\ref{lem:proj-general} to $a$ and $b$ with this sequence, and using the monotonicity of $\alpha$ on $\Gamma(A)$, we obtain
\[
\begin{aligned}
  \varphi_\alpha(a)
  &= \sum_{i=1}^m (t_i-t_{i+1})\,\alpha\bigl(\rk(E_a([t_i,\infty)))\bigr) \\
  &\le \sum_{i=1}^m (t_i-t_{i+1})\,\alpha\bigl(\rk(E_b([t_i,\infty)))\bigr)
   = \varphi_\alpha(b).
\end{aligned}
\]
Thus $\varphi_\alpha$ is monotone.
\end{proof}

We now show that the above construction exhausts all Choquet traces on the finite-dimensional C$^*$-algebra $A=\bigoplus_{s=1}^L M_{k_s}(\C)$.

\begin{theorem}\label{thm:direct-sum-characterization}
Let $A=\bigoplus_{s=1}^L M_{K_s}(\C)$, and $\varphi : A^+ \to [0,\infty)$ be a Choquet trace on $A$. 
Then there exists a unique increasing function $\alpha : \Gamma(A) \to [0,\infty)$ with $\alpha(0)=0$ such that
\[
  \varphi(a) = \varphi_\alpha(a)
  \qquad (a\in A^+). 
\]
\end{theorem}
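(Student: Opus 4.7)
My plan is to recover $\alpha$ directly from the values of $\varphi$ on projections, and then to reduce the equality $\varphi=\varphi_\alpha$ to iterated comonotonic additivity applied to the telescoping spectral expansion of an arbitrary $a\in A^+$.

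I would first define $\alpha:\Gamma(A)\to[0,\infty)$ by choosing, for each $\vec r=(r_1,\dots,r_L)\in\Gamma(A)$, any projection $p=\bigoplus_s p_s\in A$ with $\rk(p_s)=r_s$ and setting $\alpha(\vec r):=\varphi(p)$. To see this is well-defined, I would note that any two such projections $p,p'$ are unitarily equivalent in $A$: in each block $M_{k_s}(\C)$ two projections of the same rank are unitarily equivalent, and the per-block unitaries $u_s$ assemble into a unitary $u=\bigoplus_s u_s\in A$ with $upu^*=p'$, so $\varphi(p)=\varphi(p')$ by (U). The value $\alpha(0)=0$ is forced by $\varphi(0)=0$, which itself follows from (H) with $r=0$. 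For monotonicity of $\alpha$, given $\vec r\le\vec r'$ componentwise I would realize the inequality by diagonal projections $p\le p'$ with $\rk(p_s)=r_s$ and $\rk(p_s')=r_s'$, and apply (M).

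Next I would prove $\varphi(a)=\varphi_\alpha(a)$ for an arbitrary $a\in A^+$. Let $\lambda_1>\cdots>\lambda_{K_a}\ge 0$ enumerate the distinct points of $\sigma(a)$, set $\lambda_{K_a+1}:=0$, and put $q_j:=E_a([\lambda_j,\infty))$. Define $g_j$ on $\sigma(a)$ by $g_j(t)=(\lambda_j-\lambda_{j+1})\mathbf 1_{[\lambda_j,\infty)}(t)$; since $\sigma(a)$ is a finite set, every function on it is automatically continuous, so $g_j\in C(\sigma(a))^+$ and $g_j(a)=(\lambda_j-\lambda_{j+1})q_j$ via functional calculus. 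Each $g_j$ is nondecreasing as a function on $\sigma(a)$, and any finite sum of nondecreasing functions is again nondecreasing, so every partial sum is comonotonic with the next summand. Combining the telescoping identity $a=\sum_{j=1}^{K_a}g_j(a)$ with an induction on (C) gives
\[
  \varphi(a)=\sum_{j=1}^{K_a}\varphi(g_j(a)).
\]
Applying (H) to each term and invoking the definition of $\alpha$ on the projection $q_j$ yields $\varphi(g_j(a))=(\lambda_j-\lambda_{j+1})\alpha(r_j(a))$, and summation produces $\varphi_\alpha(a)$. Uniqueness is immediate: any function $\beta$ with $\varphi=\varphi_\beta$ must satisfy $\beta(\vec r)=\varphi_\beta(p)=\varphi(p)=\alpha(\vec r)$ for any projection $p$ of rank vector $\vec r$, where the first equality is a direct evaluation of the Choquet formula on $p$.

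The main point requiring care is the inductive upgrade of (C) from pairs to arbitrary finite sums; what makes this routine is precisely that a sum of nondecreasing functions on the finite set $\sigma(a)$ is still nondecreasing, hence comonotonic with the next summand. Everything else rests on the fact that rank is a complete Murray--von Neumann invariant inside each matrix block and on Theorem~\ref{thm:direct-sum-properties}, which guarantees that the $\varphi_\alpha$ produced from such an $\alpha$ is indeed a Choquet trace.
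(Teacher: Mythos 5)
Your proposal is correct and follows essentially the same route as the paper's own proof: define $\alpha$ on rank vectors via $\varphi$ evaluated on projections (well-defined by unitary equivalence and (U), increasing by (M)), then expand $a=\sum_j(\lambda_j-\lambda_{j+1})\mathbf 1_{[\lambda_j,\infty)}(a)$ and apply (C) inductively to the comonotonic nondecreasing summands, followed by (H). The only cosmetic difference is that you explicitly justify $\varphi(0)=0$ via (H) with $r=0$, which the paper leaves implicit.
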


\begin{proof}
Assume that $\varphi : A^+ \to [0,\infty)$ satisfies
\textup{(U)}, \textup{(M)}, \textup{(H)} and \textup{(C)}.

For $x\in\Gamma(A)$, let $p(x)\in A$ be a projection satisfying
$\rk(p(x))=x$, and define
\[
  \alpha(x):=\varphi(p(x)).
\]

We first check that $\alpha$ is well-defined.
If $p,q\in A$ are projections with $\rk(p)=\rk(q)=x$, then $p$ and $q$
are unitarily equivalent in $A$.
By unitary invariance \textup{(U)}, we have
$\varphi(p)=\varphi(q)$.
Hence $\alpha$ is well-defined.
In particular, $\alpha(0)=\varphi(0)=0$.

Next we show that $\alpha$ is increasing.
Let $x,y\in\Gamma(A)$ with $x\le y$.
Choose projections $p,q\in A$ such that
$\rk(p)=x$, $\rk(q)=y$, and $p\le q$.
By monotonicity \textup{(M)},
\[
  \alpha(x)=\varphi(p)\le\varphi(q)=\alpha(y).
\]
Thus $\alpha:\Gamma(A)\to[0,\infty)$ is increasing.

Let $a\in A^+$.
To simplify notation, we denote $\lambda_j(a)$ by $\lambda_j$ for all $j$, 
and set $\lambda_{K_a+1}:=0$. 
For $x\in\sigma(a)$ and $1\le k\le K_a$, define
\[
  s_k(x):=(\lambda_k-\lambda_{k+1})\,\mathbf{1}_{[\lambda_k,\infty)}(x).
\]
Then each $s_k$ is monotone increasing on $\sigma(a)$. 

On $\sigma(a)$, we have the identity
\[
  x=\sum_{k=1}^{K_a} s_k(x)\qquad(x\in\sigma(a)).
\]
By continuous functional calculus,
\[
  a=\sum_{k=1}^{K_a} s_k(a).
\]

We claim that
\[
  \varphi\Bigl(\sum_{k=1}^{K_a} s_k(a)\Bigr)=\sum_{k=1}^{K_a} \varphi\bigl(s_k(a)\bigr).
\]
Indeed, set $b_1:=s_1(a)$ and $b_m:=b_{m-1}+s_m(a)$ for $m\ge2$.
Since $s_m$ and $s_1+\cdots+s_{m-1}$ are both monotone increasing on $\sigma(a)$, they are comonotonic. 
Hence by (C),
\[
  \varphi(b_m)=\varphi(b_{m-1})+\varphi\bigl(s_m(a)\bigr).
\]
By induction on $m$, we obtain the claim.

Using (H), we have
\[
  \varphi\bigl(s_k(a)\bigr)
  =(\lambda_k-\lambda_{k+1})\,\varphi\bigl(\mathbf{1}_{[\lambda_k,\infty)}(a)\bigr).
\]
Since $\mathbf{1}_{[\lambda_k,\infty)}(a)=E_a([\lambda_k,\infty))$,
\[
  \varphi\bigl(\mathbf{1}_{[\lambda_k,\infty)}(a)\bigr)
   =\alpha\bigl(\rk(E_a([\lambda_k,\infty)))\bigr)
   =\alpha\bigl(r_k(a)\bigr).
\]
Therefore,
\[
  \varphi(a)
  =\sum_{k=1}^{K_a}(\lambda_k-\lambda_{k+1})\,\alpha\bigl(r_k(a)\bigr)
  =\varphi_\alpha(a).
\]
\end{proof}

\section{Non-linear traces of Choquet type on AF algebras}
\label{sec:AF}

In this section, we extend the Choquet formula from finite-dimensional
C$^*$-algebras to general AF algebras.
Throughout this section, an \emph{AF algebra} means a unital AF algebra.

Let $A$ be a unital AF algebra.
There exists an increasing sequence of finite-dimensional C$^*$-subalgebras
\[
  A_1 \subset A_2 \subset \cdots \subset A_n \subset \cdots \subset A,
  \qquad
  A = \overline{\bigcup_{n\ge1} A_n},
\]
where each $A_n$ is of the form
\[
  A_n = \bigoplus_{s=1}^{L_n} M_{k_{n,s}}(\mathbb{C}).
\]
Since $A$ is unital, we may and do assume that the inductive system $(A_n)_{n\ge1}$ consists of unital embeddings, so that the unit of each $A_n$ is identified with the unit $1_A$ of $A$.

For $m\ge n$, we denote by
\[
  \rho_{n,m}:A_n \longrightarrow A_m
\]
the canonical connecting $*$-homomorphism (the inclusion map), and we set
\[
  \rho_{n,n}:=\mathrm{id}_{A_n}.
\]
Then $\rho_{n,\ell}=\rho_{m,\ell}\circ\rho_{n,m}$ holds for $\ell\ge m\ge n$.

Each $\rho_{n,m}$ induces an order-preserving map on the dimension scales
\[
  \gamma_{n,m}:\Gamma(A_n)\longrightarrow \Gamma(A_m),
  \qquad
  \gamma_{n,m}([p]) := [\rho_{n,m}(p)]
\]
for a projection $p\in A_n$.
Then we have
\[
  \gamma_{n,n}:=\mathrm{id}_{\Gamma(A_n)}.
\]
Then $\gamma_{n,\ell}=\gamma_{m,\ell}\circ\gamma_{n,m}$ holds for
$\ell\ge m\ge n$.

It is well known that the ordered group $K_0(A)$ is the inductive limit of
$(K_0(A_n),(\rho_{n,m})_*)$ and that the dimension scale $\Gamma(A)$ is the inductive limit
\[
  \Gamma(A)=\varinjlim\bigl(\Gamma(A_n),\gamma_{n,m}\bigr)
\]
in the category of ordered sets (see \cite{Blackadar1998,Davidson1996}).
We denote by
\[
  \gamma_n:\Gamma(A_n)\longrightarrow \Gamma(A)
\]
the canonical map into the inductive limit.
By construction,
\[
  \gamma_\ell\circ \gamma_{n,\ell}=\gamma_n
  \qquad(\ell\ge n).
\]
Moreover, since the embeddings are unital, the element $\gamma_n([1_{A_n}])\in\Gamma(A)$
does not depend on $n$ and coincides with the order unit $[1_A]$.

In the finite-dimensional case, Choquet traces are completely described by increasing functions on the corresponding finite dimension scales.
This observation provides the guiding principle for the AF case.

Let $\alpha:\Gamma(A)\to[0,\infty)$ be an increasing map with $\alpha(0)=0$.
For each $n$, define
\[
  \alpha^{(n)}:\Gamma(A_n)\longrightarrow[0,\infty),
  \qquad
  \alpha^{(n)}(x):=\alpha(\gamma_n(x)).
\]
By section 3, there exists a unique Choquet trace
\[
  \varphi_\alpha^{(n)}:A_n^+\longrightarrow[0,\infty)
\]
such that
\[
  \varphi_\alpha^{(n)}(p)=\alpha^{(n)}(\rk (p))
\]
for any projection $p\in A_n$. 
The Choquet formula takes the form
\[
   \varphi_\alpha^{(n)}(a)
=
\sum_{k=1}^{K_a}
  (\lambda_k(a)-\lambda_{k+1}(a))\,
  \alpha^{(n)}(r_k(a)),
  \]
where $r_k(a)=\rk(E_a([\lambda_k(a),\infty)))\in\Gamma(A_n)$ and $\lambda_{K_a+1}(a):=0$.

In order to pass from the finite-dimensional algebras $A_n$ to the AF algebra $A$, we need to verify that the Choquet traces constructed at each stage are compatible with the connecting maps.
The following lemma establishes this compatibility.

\begin{lemma}\label{lem:AF-compat}
For every $n$ and every $a\in A_n^+$,
\[
  \varphi_\alpha^{(n)}(a)
  =
  \varphi_\alpha^{(n+1)}(\rho_{n,n+1}(a)).
\]
\end{lemma}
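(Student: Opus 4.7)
The plan is to prove the identity by comparing the Choquet formulas at stages $n$ and $n+1$ term by term. Since both $\varphi_\alpha^{(n)}(a)$ and $\varphi_\alpha^{(n+1)}(\rho_{n,n+1}(a))$ are given by the explicit formula from Section~3, it suffices to match, for every $k$, the eigenvalue differences $\lambda_k-\lambda_{k+1}$ and the values $\alpha^{(n)}(r_k(a))$, $\alpha^{(n+1)}(r_k(\rho_{n,n+1}(a)))$ of the scaled functions on the rank vectors of the corresponding spectral projections.

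First I would use the fact that $\rho_{n,n+1}:A_n\hookrightarrow A_{n+1}$ is a unital (injective) $*$-homomorphism, so $\sigma(\rho_{n,n+1}(a))=\sigma(a)$. Consequently $K_a=K_{\rho_{n,n+1}(a)}$ and the distinct eigenvalues $\lambda_k$ coincide, so the eigenvalue gaps in both Choquet formulas are equal. Next, because $\sigma(a)$ is finite, for each $k$ the function $\mathbf{1}_{[\lambda_k,\infty)}$ is continuous on $\sigma(a)$, and hence the spectral projection $E_a([\lambda_k,\infty))$ is actually given by continuous functional calculus applied to $a$ and lies in $A_n$. Since continuous functional calculus commutes with any unital $*$-homomorphism between C$^*$-algebras, this yields
\[
  \rho_{n,n+1}\bigl(E_a([\lambda_k,\infty))\bigr)
  = E_{\rho_{n,n+1}(a)}([\lambda_k,\infty)).
\]
Taking $K_0$-classes and using the definition $\gamma_{n,n+1}([p])=[\rho_{n,n+1}(p)]$, one obtains
\[
  \gamma_{n,n+1}\bigl(r_k(a)\bigr)=r_k\bigl(\rho_{n,n+1}(a)\bigr)
  \quad \text{in } \Gamma(A_{n+1}).
\]

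The last step is to combine this with the compatibility $\gamma_{n+1}\circ\gamma_{n,n+1}=\gamma_n$ of the canonical maps into the inductive limit, which gives
\[
  \alpha^{(n+1)}\bigl(r_k(\rho_{n,n+1}(a))\bigr)
  = \alpha\bigl(\gamma_{n+1}(\gamma_{n,n+1}(r_k(a)))\bigr)
  = \alpha\bigl(\gamma_n(r_k(a))\bigr)
  = \alpha^{(n)}\bigl(r_k(a)\bigr).
\]
Substituting this equality, together with the matching of eigenvalues, into the Choquet formula for $\varphi_\alpha^{(n+1)}(\rho_{n,n+1}(a))$ reproduces the formula for $\varphi_\alpha^{(n)}(a)$ summand by summand. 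I do not expect a serious obstacle here; the only subtlety is checking that the spectral projections of a finite-spectrum positive element lie in the subalgebra and are sent to the corresponding spectral projections by the inclusion, which is a naturality statement for continuous functional calculus.
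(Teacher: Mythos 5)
Your proposal is correct and follows essentially the same route as the paper's proof: preservation of the spectrum under the unital injective inclusion, naturality of functional calculus giving $\rho_{n,n+1}(E_a([\lambda_k,\infty)))=E_{\rho_{n,n+1}(a)}([\lambda_k,\infty))$, and the compatibility $\gamma_{n+1}\circ\gamma_{n,n+1}=\gamma_n$ to match the $\alpha$-values term by term. Your explicit remark that injectivity is needed for $\sigma(\rho_{n,n+1}(a))=\sigma(a)$ and that the spectral projections are continuous functional calculus on the finite spectrum is a slightly more careful justification of steps the paper takes for granted.
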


\begin{proof}
Let $a\in A_n^+$.
List the distinct spectral values of $a$ in decreasing order as
\[
  \lambda_1(a)>\cdots>\lambda_{K_a}(a)\ge0,
\]
and put $\lambda_{K_a+1}(a):=0$.
Since $\rho_{n,n+1}$ is a unital $*$-homomorphism,
we have
\[
  \sigma(\rho_{n,n+1}(a))=\sigma(a),
\]
and hence we may write
\[
  \lambda_j(\rho_{n,n+1}(a))=\lambda_j(a)
  \quad (j=1,\dots,K_a).
\]

For each $j$, the functional calculus gives
\[
  E_{\rho_{n,n+1}(a)}([\lambda_j(a),\infty))
  =
  \rho_{n,n+1}(E_a([\lambda_j(a),\infty))).
\]
Taking ranks and using the definition of $\gamma_{n,n+1}$, we obtain
\[
  \rk\!\left(E_{\rho_{n,n+1}(a)}([\lambda_j(a),\infty))\right)
  =
  \gamma_{n,n+1}\!\left(\rk(E_a([\lambda_j(a),\infty)))\right).
\]
Applying $\gamma_{n+1}$ and using $\gamma_{n+1}\circ\gamma_{n,n+1}=\gamma_n$,
we have
\[
  \gamma_{n+1}\!\left(\rk(E_{\rho_{n,n+1}(a)}([\lambda_j(a),\infty)))\right)
  =
  \gamma_n\!\left(\rk(E_a([\lambda_j(a),\infty)))\right).
\]

Therefore,
\[
\begin{aligned}
  \varphi_\alpha^{(n+1)}(\rho_{n,n+1}(a))
  &=
  \sum_{j=1}^{K_a}
    (\lambda_j(a)-\lambda_{j+1}(a))\,
    \alpha\!\left(
      \gamma_{n+1}\!\left(
        \rk(E_{\rho_{n,n+1}(a)}([\lambda_j(a),\infty)))
      \right)
    \right) \\
  &=
  \sum_{j=1}^{K_a}
    (\lambda_j(a)-\lambda_{j+1}(a))\,
    \alpha\!\left(
      \gamma_n\!\left(
        \rk(E_a([\lambda_j(a),\infty)))
      \right)
    \right) \\
  &=
  \varphi_\alpha^{(n)}(a),
\end{aligned}
\]
which proves the assertion.
\end{proof}

To extend the local constructions to the whole algebra, we need a spectral approximation lemma ensuring that spectra are preserved in the approximation process.

\begin{lemma}\label{lem:AF-spectral-approx}
Let $A$ be a unital AF algebra with an increasing sequence
$(A_n)_{n\ge1}$ of finite-dimensional subalgebras whose union is dense in $A$.
For any $a\in A^+$, there exists a sequence $(a_m)_{m\ge1}\subset\bigcup_n A_n^+$
such that
\[
  \|a_m\|\le\|a\|,
  \qquad
  \|a_m-a\|\to0,
  \qquad
  \sigma(a_m)\subset\sigma(a).
\]
\end{lemma}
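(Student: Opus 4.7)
The plan is to first produce a positive norm-approximation $b_m\in A_{n(m)}^+$ with $\|b_m\|\le\|a\|$ and $\|b_m-a\|<1/m$, and then push its spectrum into $\sigma(a)$ by applying a carefully chosen continuous function via functional calculus. The critical observation is that, since $A_{n(m)}$ is finite-dimensional, $\sigma(b_m)$ is a \emph{finite} set, which is what will allow one to define the correcting function pointwise on $\sigma(b_m)$ and extend continuously even when $\sigma(a)$ is totally disconnected.

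For the approximation step, I would use density of $\bigcup_n A_n$ to get $c_m\in A_{n(m)}$ with $\|c_m-a\|$ arbitrarily small, replace $c_m$ by its self-adjoint part $(c_m+c_m^*)/2$, and take the positive part inside the finite-dimensional algebra $A_{n(m)}$, using the standard inequality $\|x_+-y_+\|\le\|x-y\|$ for self-adjoints to preserve the norm estimate. Applying the clip function $t\mapsto\min(t,\|a\|)$ via functional calculus then produces $b_m\in A_{n(m)}^+$ with $\|b_m\|\le\|a\|$ and $\|b_m-a\|<1/m$, at the cost of at most doubling the initial error.

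For the spectrum correction, define a function $\tilde f_m$ on the compact set $\sigma(a)\cup\sigma(b_m)\subset[0,\|a\|]$ by $\tilde f_m(t):=t$ on $\sigma(a)$ and $\tilde f_m(t):=$ some nearest point of $\sigma(a)$ on the finitely many points of $\sigma(b_m)\setminus\sigma(a)$. By upper semi-continuity of the spectrum for self-adjoint elements, $\mathrm{dist}(t,\sigma(a))\le\|b_m-a\|<1/m$ for every $t\in\sigma(b_m)$, so $|\tilde f_m(t)-t|\le 1/m$ throughout $\sigma(a)\cup\sigma(b_m)$. Continuity of $\tilde f_m$ on this union is immediate: points of $\sigma(b_m)\setminus\sigma(a)$ are isolated in the union because $\sigma(a)$ is closed and $\sigma(b_m)$ is finite, and at points of $\sigma(a)$ only sequences from $\sigma(a)$ matter for the limiting behaviour, again by finiteness of $\sigma(b_m)$. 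I would then extend $\tilde f_m$ to a continuous function $f_m:[0,\|a\|]\to[0,\|a\|]$ by piecewise linear interpolation on the countably many bounded open components of the complement, and as a constant on any component adjacent to the boundary; on each bounded subinterval the deviation $f_m-\mathrm{id}$ is itself linear, so it attains its extremal values at the endpoints and is bounded by $1/m$.

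Setting $a_m:=f_m(b_m)\in A_{n(m)}^+$ then finishes the proof: the inclusion $f_m(\sigma(b_m))\subset\sigma(a)$ gives $\sigma(a_m)\subset\sigma(a)$, the range condition on $f_m$ gives $\|a_m\|\le\|a\|$, and the decomposition $a_m-a=(f_m(b_m)-b_m)+(b_m-a)$ combined with $\|f_m(b_m)-b_m\|=\sup_{t\in\sigma(b_m)}|f_m(t)-t|\le 1/m$ yields $\|a_m-a\|<2/m\to 0$. The main obstacle lies precisely in the construction of $f_m$: if $\sigma(a)$ is a Cantor-type set, there is no continuous retraction of $[0,\|a\|]$ onto $\sigma(a)$ at all, so any approach that tries to work uniformly on the whole interval is doomed. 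The resolution is to exploit that only the values of $f_m$ on the finite set $\sigma(b_m)$ influence $f_m(b_m)$, and to use continuous interpolation merely as a bookkeeping device on the irrelevant region.
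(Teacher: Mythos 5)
Your proof follows essentially the same route as the paper: approximate $a$ by a positive finite\-/dimensional element $b_m$, observe that $\sigma(b_m)$ is finite, and replace each spectral value of $b_m$ by a nearest point of $\sigma(a)$, using the spectral perturbation bound $\operatorname{dist}(t,\sigma(a))\le\|b_m-a\|$ for $t\in\sigma(b_m)$ to control the error. The paper performs this replacement directly on the spectral decomposition $b_m=\sum_i\lambda_i^{(m)}p_i^{(m)}\mapsto\sum_i\mu_i^{(m)}p_i^{(m)}$; your continuous extension of $\tilde f_m$ to all of $[0,\|a\|]$ is harmless but unnecessary overhead, since (as you note yourself) only the values on the finite set $\sigma(b_m)$ enter $f_m(b_m)$, so the worry about retractions onto Cantor sets never arises.

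One auxiliary claim you invoke is false as stated: the inequality $\|x_+-y_+\|\le\|x-y\|$ for self-adjoint elements does \emph{not} hold in the operator norm. Since $|t|=2t_+-t$, such an inequality would make $t\mapsto|t|$ operator Lipschitz with constant $3$, contradicting the classical results of McIntosh and Kato (the optimal constant in $M_n$ grows like $\log n$). The step is nonetheless easily repaired because here one of the two elements is the positive element $a$ itself: writing $c$ for your self-adjoint approximant, $\|c_+-a\|\le\|c_+-c\|+\|c-a\|=\|c_-\|+\|c-a\|\le 2\|c-a\|$, since $\|c_-\|=\max(0,-\min\sigma(c))\le\|c-a\|$ because $a\ge0$; a similar triangle-inequality argument handles the clipping by $\min(t,\|a\|)$. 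With that substitution your argument is complete and matches the paper's.
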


\begin{proof}
For each $m\in\mathbb{N}$, choose $n(m)$ and $b_m\in A_{n(m)}^+$ such that
\[
  \|a-b_m\|<\frac1m,
  \qquad
  \|b_m\|\le\|a\|.
\]
Since $b_m$ is positive and finite-dimensional, it admits a spectral
decomposition
\[
  b_m=\sum_{i=1}^{r_m}\lambda_i^{(m)}p_i^{(m)},
\]
where $p_i^{(m)}\in A_{n(m)}$ are pairwise orthogonal projections and
$\lambda_i^{(m)}\ge0$.

For each $i$, choose $\mu_i^{(m)}\in\sigma(a)$ such that
\[
  |\mu_i^{(m)}-\lambda_i^{(m)}|
  =
  dist(\lambda_i^{(m)},\sigma(a)).
\]
Define
\[
  a_m:=\sum_{i=1}^{r_m}\mu_i^{(m)}p_i^{(m)}\in A_{n(m)}^+.
\]
Then $\sigma(a_m)\subset\sigma(a)$ by construction.
Moreover,
\[
\|a_m-b_m\|
=
\max_i|\mu_i^{(m)}-\lambda_i^{(m)}|
\le
\|a-b_m\|
<
\frac1m,
\]
and hence
\[
  \|a_m-a\|
  \le
  \|a_m-b_m\|+\|b_m-a\|
  <
  \frac{2}{m}.
\]
Finally, since $\mu_i^{(m)}\le\|a\|$ for all $i$, we have $\|a_m\|\le\|a\|$.
This completes the proof.
\end{proof}
To extend the locally defined maps to the whole algebra $A$, we need a uniform continuity estimate. 
We obtain a uniform Lipschitz bound for the local maps
$\varphi_\alpha^{(n)}$ from Lemma~\ref{lem:choquet-Lipschitz}. 
In particular, the map
\[
  \bigcup_{n\ge1}A_n^+\longrightarrow[0,\infty),
  \qquad
  a\longmapsto\varphi_\alpha^{(n)}(a)\ (a\in A_n^+),
\]
is well defined and $L$-Lipschitz.

We now construct a global Choquet trace on $A$ from an increasing function on
the dimension scale.

\begin{theorem}\label{thm:AF-existence}
Let $A$ be an AF algebra and
$\alpha:\Gamma(A)\to[0,\infty)$ be an increasing map with $\alpha(0)=0$.
Then there exists a unique Choquet trace
\[
  \Phi_\alpha:A^+\longrightarrow[0,\infty)
\]
such that
\[
  \Phi_\alpha(p)=\alpha([p]) \qquad \text{for every projection } p\in A.
\]
\end{theorem}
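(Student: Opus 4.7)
The overall strategy is to exploit the compatible family $\{\varphi_\alpha^{(n)}\}$ of finite-dimensional Choquet traces provided by Lemma~\ref{lem:AF-compat}: they glue to a map $\Phi_0$ on the dense $*$-subalgebra $\bigcup_n A_n^+$ of $A^+$, which extends by Lipschitz continuity to all of $A^+$. One then verifies the four Choquet-trace axioms on $A$ by norm approximation, and uniqueness reduces to the finite-dimensional characterization in Theorem~\ref{thm:direct-sum-characterization}.

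More precisely, applying Lemma~\ref{lem:choquet-Lipschitz} inside each $A_n$ shows that $\varphi_\alpha^{(n)}$ is $L$-Lipschitz with $L = \varphi_\alpha^{(n)}(1_{A_n}) = \alpha(\gamma_n([1_{A_n}])) = \alpha([1_A])$, a bound independent of $n$. The compatibility of Lemma~\ref{lem:AF-compat} then yields a well-defined $L$-Lipschitz map $\Phi_0:\bigcup_n A_n^+\to[0,\infty)$. Density of $\bigcup_n A_n^+$ in $A^+$ (for any $a\in A^+$ and $b_n\in A_n$ approximating $a$, the positive elements $(b_n^* b_n)^{1/2}\in A_n^+$ converge to $a$) allows a unique $L$-Lipschitz extension $\Phi_\alpha:A^+\to[0,\infty)$.

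The bulk of the work is to verify that $\Phi_\alpha$ satisfies (U), (M), (H), (C) on all of $A^+$. Property (H) is immediate by passing $\varphi_\alpha^{(n)}(t a_n)=t\varphi_\alpha^{(n)}(a_n)$ to the limit with $a_n\to a$. For (C), given comonotonic $f,g\in C(\sigma(a))^+$, Lemma~\ref{lem:AF-spectral-approx} furnishes $a_m\in A_{n(m)}^+$ with $\sigma(a_m)\subset\sigma(a)$ and $a_m\to a$; comonotonicity survives on $\sigma(a_m)$ and functional calculus is norm continuous, so (C) inside $A_{n(m)}$ passes to the limit. For (M), given $0\le a\le b$ with approximations $a_n,b_n\in A_{k_n}^+$ at error $\varepsilon_n\to 0$, one has $a_n\le b_n+2\varepsilon_n 1_{A_{k_n}}$, and combining (M), (C), (H) in $A_{k_n}$ yields $\varphi_\alpha^{(k_n)}(a_n)\le\varphi_\alpha^{(k_n)}(b_n)+2\varepsilon_n\alpha([1_A])$, whose limit is $\Phi_\alpha(a)\le\Phi_\alpha(b)$. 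For (U), the AF hypothesis gives $K_1(A)=0$ and a connected $U(A)$, so every unitary $u\in A$ can be written as a finite product $\prod_j e^{ih_j}$ with $h_j$ self-adjoint in $A$; approximating each $h_j$ by a self-adjoint in some $A_n$ produces unitaries $u_n\in A_n$ with $u_n\to u$, and jointly with $a_n\to a$ the equality $\varphi_\alpha^{(n)}(u_n a_n u_n^*)=\varphi_\alpha^{(n)}(a_n)$ passes to the limit.

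Finally, the projection identity $\Phi_\alpha(p)=\alpha([p])$ is clear when $p\in A_n$ from the construction, and for a general projection $p\in A$ the standard AF fact that $p$ is unitarily equivalent in $A$ to a projection $q$ in some $A_n$ with $[q]=[p]$, combined with (U), gives the claim. Uniqueness follows because any Choquet trace $\Phi$ on $A$ with $\Phi(p)=\alpha([p])$ on projections is Lipschitz by Lemma~\ref{lem:choquet-Lipschitz}, its restriction to each $A_n^+$ is a Choquet trace on $A_n$ whose values on projections are $\alpha^{(n)}\circ\rk$, hence Theorem~\ref{thm:direct-sum-characterization} forces $\Phi|_{A_n^+}=\varphi_\alpha^{(n)}=\Phi_\alpha|_{A_n^+}$, and density gives $\Phi=\Phi_\alpha$. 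The main obstacles I anticipate are (M), where $a\le b$ in $A$ does not lift directly to an inequality in the $A_n$ and must be absorbed by the Lipschitz shift by $2\varepsilon_n 1$, and (U), which requires the AF-specific approximation of unitaries in $A$ by unitaries in the $A_n$ via the exponential description of $U_0(A)$.
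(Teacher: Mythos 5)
Your proposal is correct and follows essentially the same route as the paper: gluing the compatible family $\varphi_\alpha^{(n)}$ into an $L$-Lipschitz map with $L=\alpha([1_A])$, extending by density, verifying (M) via the $2\varepsilon 1_A$ shift, (C) via the spectral approximation lemma, and uniqueness via the finite-dimensional characterization. The only (harmless) deviation is in (U), where the paper approximates a unitary $u$ by the polar parts $u_k(u_k^*u_k)^{-1/2}$ of nearby elements $u_k\in A_{n(k)}$ rather than invoking the exponential description of $U_0(A)$; your extra care with the projection identity for projections not lying in any $A_n$ is a point the paper glosses over.
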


\begin{proof}
Define a map
\[
  f:\bigcup_{n\ge1}A_n^+ \to[0,\infty),
  \qquad
  f(a):=\varphi_\alpha^{(n)}(a)\quad (a\in A_n^+).
\]
By Lemma~\ref{lem:AF-compat}, $f$ is well defined.
Moreover, each $\varphi_\alpha^{(n)}$ is $L$-Lipschitz with
\[
  L=\alpha([1_A]),
\]
hence $f$ is $L$-Lipschitz on $\bigcup_n A_n^+$.
Since $\bigcup_n A_n^+$ is norm-dense in $A^+$, $f$ extends uniquely to a
$L$-Lipschitz map
\[
  \Phi_\alpha:A^+\to[0,\infty).
\]
Clearly, $\Phi_\alpha(p)=\alpha([p])$ holds for every projection $p\in A$.

\noindent
(U) 
Let $u\in A$ be unitary and $a\in A^+$.
Choose a sequence $n(k)$ and elements $u_k\in A_{n(k)}$ and $a_k\in A_{n(k)}^+$
such that $\|u-u_k\|\to0$ and $\|a-a_k\|\to0$.
For suﬃciently large $k$, replace $u_k$ by its polar part $u_k(u_k^*u_k)^{-1/2}$, so that
$u_k$ is unitary and still $u_k\to u$ in norm.
Since $\varphi_\alpha^{(n(k))}$ is unitarily invariant on $A_{n(k)}^+$,
\[
  \Phi_\alpha(a_k)
  =\varphi_\alpha^{(n(k))}(a_k)
  =\varphi_\alpha^{(n(k))}(u_k a_k u_k^*)
  =\Phi_\alpha(u_k a_k u_k^*).
\]
Letting $k\to\infty$ and using norm-continuity of $\Phi_\alpha$, we obtain
$\Phi_\alpha(a)=\Phi_\alpha(uau^*)$.

\noindent
(M)  
Let $0\le a\le b$ in $A$.
Choose $n(k)$ and $a_k,b_k\in A_{n(k)}^+$ such that $\|a-a_k\|\to0$ and
$\|b-b_k\|\to0$.
Fix $\varepsilon>0$. For suﬃciently large $k$, we have $\|a-a_k\|<\varepsilon$ and
$\|b-b_k\|<\varepsilon$. 
Since $a\le b$, it follows that
\[
  a_k \le b_k + 2\varepsilon 1_A
\]
for suﬃciently large $k$.
Using monotonicity and positive homogeneity on $A_{n(k)}^+$,
\[
  \Phi_\alpha(a_k)=\varphi_\alpha^{(n(k))}(a_k)
  \le \varphi_\alpha^{(n(k))}(b_k+2\varepsilon 1_A)
  = \Phi_\alpha(b_k+2\varepsilon 1_A).
\]
Letting $k\to\infty$ and using continuity, we obtain
\[
  \Phi_\alpha(a)\le \Phi_\alpha(b+2\varepsilon 1_A).
\]
Since $\varepsilon$ is arbitrary, we have $\Phi_\alpha(a)\le\Phi_\alpha(b)$.

\noindent
(H)  
Let $r\ge0$ and $a\in A^+$.
Choose $a_k\in\bigcup_n A_n^+$ with $\|a-a_k\|\to0$.
Then for each $k$,
\[
  \Phi_\alpha(ra_k)=\varphi_\alpha^{(n(k))}(ra_k)=r\,\varphi_\alpha^{(n(k))}(a_k)=r\,\Phi_\alpha(a_k).
\]
Letting $k\to\infty$, we obtain $\Phi_\alpha(ra)=r\,\Phi_\alpha(a)$.

\noindent
(C)  
Let $a\in A^+$ and any continuous functions $f,g\in C(\sigma(a))^+$ that are comonotonic on $\sigma(a)$. 
Choose a sequence $a_m\in\bigcup_n A_n^+$ such that
\[
  \|a_m-a\|\to0,
  \qquad
  \sigma(a_m)\subset\sigma(a)
\]
(Lemma~\ref{lem:AF-spectral-approx}).
Then $f$ and $g$ are also comonotone on $\sigma(a_m)$.

For $m$, let $n(m)$ with $a_m\in A_{n(m)}^+$.
Since the property (C) of $\varphi_\alpha^{(n(m))}$, 
we have
\begin{align*}
\Phi_\alpha\bigl(f(a_m)+g(a_m)\bigr)
  &=\varphi_\alpha^{(n(m))}\bigl(f(a_m)+g(a_m)\bigr)
\\
&=\varphi_\alpha^{(n(m))}\bigl(f(a_m)\bigr)+\varphi_\alpha^{(n(m))}\bigl(g(a_m)\bigr)
\\
&=\Phi_\alpha\bigl(f(a_m)\bigr)+\Phi_\alpha\bigl(g(a_m)\bigr).
\end{align*}
Since $f$ and $g$ are continuous on $\sigma(a)$, 
we obtain $f(a_m)\to f(a)$ and $g(a_m)\to g(a)$ in norm.
By continuity of $\Phi_\alpha$, letting $m\to\infty$ yields
\[ \Phi_\alpha\bigl(f(a)+g(a)\bigr)=\Phi_\alpha\bigl(f(a)\bigr)+\Phi_\alpha\bigl(g(a)\bigr).
\]
Therefore $\Phi_{\alpha}$ is a Choquet trace. 

Finally, we show that the map $\Phi_\alpha$ is unique.
Let $\Psi:A^+\to[0,\infty)$ be a Choquet trace with
$\Psi(p)=\alpha([p])$ for every projection $p\in A$.
By Lemma~$\ref{lem:choquet-Lipschitz}$, $\Psi$ is Lipschitz continuous. 
Fix $n$, the characterization of Choquet traces on $A_n$ shows that $\Psi|_{A_n^+}=\varphi_\alpha^{(n)}$.
Hence $\Psi=f$ on $\bigcup_n A_n^+$, and by continuity, 
we conclude $\Psi=\Phi_\alpha$ on $A^+$.
\end{proof}

We next show that every Choquet trace on $A$ arises uniquely from an increasing function on the dimension scale.

\begin{lemma}\label{lem:AF-from-Phi-to-alpha}
Let $\varphi:A^+\to[0,\infty)$ be a Choquet trace on an AF algebra $A$.
Then there exists a unique increasing map
\[
  \alpha:\Gamma(A)\to[0,\infty),
  \qquad
  \alpha(0)=0,
\]
such that
\[
  \alpha([p])=\varphi(p)
  \qquad
  \text{for every projection } p\in A.
\]
\end{lemma}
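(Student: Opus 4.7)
The plan is to bootstrap Theorem~\ref{thm:direct-sum-characterization} up the AF filtration and then assemble the resulting local data into a single increasing function on $\Gamma(A)=\varinjlim \Gamma(A_n)$.

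First, for each $n$ the restriction $\varphi|_{A_n^+}$ inherits the properties (U), (M), (H), (C): every unitary of $A_n$ is a unitary of $A$, and the spectral data of an element $a\in A_n^+$ are identical whether the continuous functional calculus is performed in $A_n$ or in $A$. Hence $\varphi|_{A_n^+}$ is a Choquet trace on the finite-dimensional algebra $A_n$, and Theorem~\ref{thm:direct-sum-characterization} produces a unique increasing function $\alpha^{(n)}:\Gamma(A_n)\to[0,\infty)$ with $\alpha^{(n)}(0)=0$ and $\alpha^{(n)}([p])=\varphi(p)$ for every projection $p\in A_n$. Next I would verify compatibility with the connecting maps: for a projection $p\in A_n$, the element $\rho_{n,n+1}(p)$ is literally the same projection, so
\[
 \alpha^{(n+1)}(\gamma_{n,n+1}([p]))
 = \varphi(\rho_{n,n+1}(p))
 = \varphi(p)
 = \alpha^{(n)}([p]),
\]
which gives $\alpha^{(n+1)}\circ\gamma_{n,n+1}=\alpha^{(n)}$. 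The universal property of the inductive limit of ordered pointed sets then yields a unique map $\alpha:\Gamma(A)\to[0,\infty)$ with $\alpha\circ\gamma_n=\alpha^{(n)}$ for every $n$, and in particular $\alpha(0)=0$.

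It then remains to check that $\alpha$ is increasing and that the identity $\alpha([p])=\varphi(p)$ extends to arbitrary projections $p\in A$. For monotonicity, if $x\le y$ in $\Gamma(A)$ then by the standard description of the order on an inductive limit of dimension groups the comparison is witnessed at some finite stage: there exist $n$ and $x_n,y_n\in\Gamma(A_n)$ with $x_n\le y_n$, $\gamma_n(x_n)=x$ and $\gamma_n(y_n)=y$, and then $\alpha(x)=\alpha^{(n)}(x_n)\le\alpha^{(n)}(y_n)=\alpha(y)$ by monotonicity of $\alpha^{(n)}$. For a projection $p\in A$, choose $n$ large and a projection $q\in A_n$ with $\|p-q\|<1$; the standard lemma on close projections in a C$^*$-algebra produces a unitary $u\in A$ with $upu^*=q$, so by (U) and the already established identity on $A_n$,
\[
 \varphi(p)=\varphi(q)=\alpha^{(n)}([q])=\alpha(\gamma_n([q]))=\alpha([p]).
\]
Uniqueness is then immediate, because $\Gamma(A)=\{[p]:p\in A\text{ is a projection}\}$ and any two maps with the stated property must agree on each such generator.

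The main obstacle is organizational rather than computational: one must ensure that the function produced from the finite-stage data $\alpha^{(n)}$ really lives on $\Gamma(A)$ as an ordered set, and that it takes the prescribed value on \emph{every} projection of $A$, not merely on those happening to lie in some $A_n$. This is exactly where the AF structure enters in an essential way, through the two standard facts that order comparisons in the dimension scale can be realized at a finite stage and that every projection in $A$ is unitarily equivalent to a projection in some $A_n$. Once these ingredients are invoked, the rest of the argument is a clean transfer of Theorem~\ref{thm:direct-sum-characterization} along the inductive system.
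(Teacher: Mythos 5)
Your proof is correct, but it is organized differently from the paper's. The paper defines $\alpha$ in one stroke by $\alpha([p]):=\varphi(p)$ for an arbitrary projection $p\in A$, and the only real work is well-definedness: Murray--von Neumann equivalent projections in an AF algebra are unitarily equivalent (since $[1-p]=[1-q]$ by cancellation), so (U) makes $\varphi$ constant on each class; monotonicity of $\alpha$ is then checked by representing a comparison $[p]\le[q]$ at a finite stage, exactly as you do. You instead build the function bottom-up: restrict $\varphi$ to each $A_n$, invoke Theorem~\ref{thm:direct-sum-characterization} to obtain $\alpha^{(n)}$, check compatibility with the connecting maps, and glue via the universal property of $\varinjlim\Gamma(A_n)$; you then still have to pay for the identification of $\Gamma(A)$ with the inductive limit by showing that every projection of $A$ is unitarily equivalent to one in some $A_n$ (your close-projections step). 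Your route is somewhat heavier than necessary --- Theorem~\ref{thm:direct-sum-characterization} delivers the entire Choquet formula on $A_n^+$ when only the values $\varphi(p)=\alpha^{(n)}([p])$ on projections are needed, and those follow from (U) and (M) alone --- but it has the virtue of making explicit the approximation fact (every projection of $A$ is norm-close, hence unitarily equivalent, to a projection in $\bigcup_n A_n$) that the paper's direct definition quietly relies on when it identifies $\Gamma(A)$ with $\varinjlim\Gamma(A_n)$. Both arguments are sound, and the uniqueness step is identical in the two.
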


\begin{proof}
Define $\alpha$ on $\Gamma(A)$ by
\[
  \alpha([p]):=\varphi(p)\qquad \text{for every } [p] \in \Gamma(A).
\]
We first show that this is well defined.

Let $p,q\in A$ be Murray--von Neumann equivalent projections.
Since $A$ is an AF algebra, we have
\[
[1-p]=[1-q] \quad \text{in } K_0(A).
\]
Hence $p$ and $q$ are unitarily equivalent.
Therefore, the map $\alpha$ is well-defined.

Clearly $\alpha(0)=0$.
To see that $\alpha$ is increasing, suppose $[p]\le[q]$ in $\Gamma(A)$.
Then at some stage $A_n$ we can represent these classes by projections $p',q'\in A_n$
with $[p']\le[q']$ in $\Gamma(A_n)$.
Thus $p'$ is Murray--von Neumann subequivalent to $q'$ in $A_n$, and hence
$p'\precsim q'$ implies $\varphi(p')\le\varphi(q')$ by monotonicity of $\varphi$.
Consequently, $\alpha([p])\le\alpha([q])$.

Finally, uniqueness is immediate from the defining relation
$\alpha([p])=\varphi(p)$ for projections $p$.
\end{proof}

\begin{theorem}\label{thm:AF-characterization}
There is a bijection between the set of increasing maps on $\Gamma(A)$ and the set of Choquet traces on A, under which each increasing map $\alpha$ corresponds to the Choquet trace $\varphi_\alpha$.
\end{theorem}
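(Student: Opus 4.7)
The plan is to assemble the bijection from the two maps already constructed: Theorem~\ref{thm:AF-existence} gives a map $\alpha\mapsto\Phi_\alpha$ from the set of increasing functions on $\Gamma(A)$ vanishing at $0$ into the set of Choquet traces, and Lemma~\ref{lem:AF-from-Phi-to-alpha} gives a map $\varphi\mapsto\alpha_\varphi$ in the opposite direction, where $\alpha_\varphi([p]):=\varphi(p)$. The theorem amounts to verifying that these two assignments are mutually inverse.

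First I would check that $\alpha_{\Phi_\alpha}=\alpha$. By the defining property of $\Phi_\alpha$ in Theorem~\ref{thm:AF-existence}, we have $\Phi_\alpha(p)=\alpha([p])$ for every projection $p\in A$. Applying the construction of Lemma~\ref{lem:AF-from-Phi-to-alpha} to the Choquet trace $\Phi_\alpha$ therefore yields
\[
  \alpha_{\Phi_\alpha}([p])=\Phi_\alpha(p)=\alpha([p])
\]
for every projection $p\in A$. Since the projections exhaust $\Gamma(A)$ by the very definition of the dimension scale, this gives $\alpha_{\Phi_\alpha}=\alpha$ as functions on $\Gamma(A)$.

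Next I would check that $\Phi_{\alpha_\varphi}=\varphi$ for every Choquet trace $\varphi$ on $A$. By Lemma~\ref{lem:AF-from-Phi-to-alpha}, the map $\alpha_\varphi:\Gamma(A)\to[0,\infty)$ is increasing and vanishes at $0$, and satisfies $\alpha_\varphi([p])=\varphi(p)$ for every projection $p\in A$. Thus $\varphi$ is itself a Choquet trace whose values on projections coincide with those prescribed for $\Phi_{\alpha_\varphi}$ in Theorem~\ref{thm:AF-existence}. Invoking the uniqueness clause of Theorem~\ref{thm:AF-existence}, we conclude $\varphi=\Phi_{\alpha_\varphi}$.

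The two identities together show that $\alpha\mapsto\Phi_\alpha$ is a bijection with inverse $\varphi\mapsto\alpha_\varphi$, which is exactly the content of the theorem. There is no real obstacle here: all the heavy lifting (the compatibility across the AF filtration, the Lipschitz extension, the verification of the four Choquet axioms, and the well-definedness and monotonicity of $\alpha_\varphi$) has already been carried out in Lemma~\ref{lem:AF-compat}, Theorem~\ref{thm:AF-existence}, and Lemma~\ref{lem:AF-from-Phi-to-alpha}. The only point that needs a brief remark is that a Choquet trace is completely determined by its values on projections, which is precisely the content of the uniqueness part of Theorem~\ref{thm:AF-existence} and underlies the second identity above.
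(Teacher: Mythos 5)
Your proposal is correct and follows essentially the same route as the paper: injectivity/one composite is checked by evaluating on projections, and the other composite is handled by the uniqueness statement, which the paper establishes (inside Theorem~\ref{thm:AF-existence}) by restricting to each $A_n$, invoking the finite-dimensional characterization, and extending by density and Lipschitz continuity. The only cosmetic difference is that you cite the uniqueness clause of Theorem~\ref{thm:AF-existence} directly where the paper re-runs that restriction-and-density argument in the surjectivity step; the content is identical.
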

\begin{proof}
Let
\[
  \mathcal{A}
  :=
  \{\alpha:\Gamma(A)\to[0,\infty)\mid \alpha \text{ is increasing and } \alpha(0)=0\}
\]
and
\[
  \mathcal{C}
  :=
  \{\text{Choquet traces on }A\}.
\]
Define a map
\[
  F:\mathcal{A}\to\mathcal{C}, \qquad F(\alpha):=\Phi_\alpha,
\]
where $\Phi_\alpha$ is the Choquet trace constructed in
Theorem~\ref{thm:AF-existence}.

Suppose $\alpha_1,\alpha_2\in\mathcal{A}$ satisfy
$F(\alpha_1)=F(\alpha_2)$.
Let $p\in A$ be any projection.
Then
\[
  \alpha_1([p])
  =\Phi_{\alpha_1}(p)
  =\Phi_{\alpha_2}(p)
  =\alpha_2([p]).
\]
Since every element of $\Gamma(A)$ is represented by the class of a projection,
it follows that $\alpha_1=\alpha_2$.
Hence $F$ is injective.

Let $\varphi\in\mathcal{C}$ be an arbitrary Choquet trace on $A$.
By Lemma~\ref{lem:AF-from-Phi-to-alpha}, there exists an increasing map
$\alpha\in\mathcal{A}$ such that
\[
  \alpha([p])=\varphi(p)
  \qquad
  \text{for every projection }p\in A.
\]
We claim that $\varphi=\Phi_\alpha$.

The restriction $\varphi|_{A_n^+}$ is a Choquet trace on the finite-dimensional $C^*$-algebra $A_n$ for any $n\ge1$. 
By the finite-dimensional characterization of Choquet traces,
\[
  \varphi|_{A_n^+}=\varphi_\alpha^{(n)}.
\]
Since this holds for every $n$, the two maps $\varphi$ and $\Phi_\alpha$ coincide on $\bigcup_{n\ge1}A_n^+$.
As $\bigcup_{n\ge1}A_n^+$ is norm-dense in $A^+$ and both maps are norm-continuous,
we conclude that
\[
  \varphi=\Phi_\alpha \quad \text{on } A^+.
\]
Thus $F$ is surjective.
\end{proof}

\section{Examples}
\label{sec:AF-examples}

We illustrate Theorem~\ref{thm:AF-characterization} by several concrete examples of dimension scales arising from AF algebras.
In each case, Choquet traces are described in terms of increasing maps on the corresponding scale.
For the computations of the $K_0$-groups and the associated dimension scales, 
we refer to \cite{Davidson1996}, \cite{Blackadar1998}.

\begin{example}[UHF algebras]
Let $A$ be a UHF algebra of type
\[
   N = \prod_p p^{n_p},
   \qquad n_p \in \mathbb{N}\cup\{\infty\}.
\]
It is well known that $A$ can be realized as the inductive limit of full
matrix algebras
\[
   A_k \cong M_{N_k}(\mathbb{C}),
   \qquad N_k = n_1\cdots n_k,
\]
with unital connecting maps. 
At each finite stage $k$, the dimension scale is given by
\[
   \Gamma(A_k)=\{0,1,\dots,N_k\}.
\]

Let $\mathbb{Q}_N$ denote the subgroup of\/ $\mathbb{Q}$ consisting of
rational numbers whose denominators divide $N$.
In the inductive limit, the dimension scale $\Gamma(A)$ can be identified,
as an ordered subset of $\mathbb{R}$, with
\[
   \Gamma(A)\cong [0,1]\cap\mathbb{Q}_N,
\]
via the correspondence
\[
   \gamma_k(\rk(p)) \longmapsto \frac{\rk(p)}{N_k},
   \qquad p\in A_k,
\]
where the order unit corresponds to $1$. 

The algebra $A$ admits a unique tracial state $\tau$.
By Theorem~\ref{thm:AF-characterization}, Choquet traces on $A$ are in
bijection with increasing maps on $\Gamma(A)$.
In particular, any increasing function
\[
   f:[0,1]\to[0,\infty),
   \qquad f(0)=0,
\]
induces an increasing map on the scale via
\[
   \alpha_f([p]) := f(\tau(p)),
\]
and hence a Choquet trace $\Phi_{\alpha_f}$ on $A$.
In general, unless $f$ is linear, we have
\[
   \Phi_{\alpha_f}(a) \neq f(\tau(a))
\]
for $a\in A^+$.
\end{example}

\begin{example}[The Fibonacci AF algebra]
Let $A$ be the Fibonacci AF algebra, whose Bratteli diagram has incidence
matrix
\[
   F=
   \begin{pmatrix}
      1 & 1\\
      1 & 0
   \end{pmatrix}
\]
\cite{EffrosShen1980}. 
The ordered $K_0$-group of $A$ is given by the inductive limit
\[
   K_0(A)\cong \varinjlim(\mathbb{Z}^2,F^{\mathsf T}),
\]
and the dimension scale $\Gamma(A)$ consists of the classes of projections
in $K_0(A)^+$.

The algebra $A$ is simple and admits a unique tracial state $\tau$. 
The dimension scale $\Gamma(A)$ can be realized as a dense ordered subset
of $[0,1]$. 

As in the UHF case, Choquet traces on $A$ correspond to increasing maps on
$\Gamma(A)$.
Although both examples admit a unique tracial state, their dimension scales
admit different realizations as ordered subsets of $\mathbb{R}$, which give
rise to different classes of increasing maps and hence to different
families of Choquet traces.
\end{example}

\begin{example}[A construction from a tracial state]
Let $A$ be a unital AF algebra admitting a tracial state $\tau$.
For $0<\lambda\le1$, define an increasing map on $\Gamma(A)$
\[
   \alpha_\lambda([p]) := \tau(p)^\lambda,
\]
for any projection $p$. 
By Theorem~\ref{thm:AF-characterization}, this yields a Choquet trace
$\Phi_{\alpha_\lambda}$ on $A$.
When $\lambda\neq1$, the resulting Choquet trace is non-linear and is not determined solely by the value of $\tau(a)$ for general $a\in A^+$.
\end{example}

These examples illustrate how the order-theoretic structure of the
dimension scale $\Gamma(A)$ governs the possible classes of Choquet traces
on AF algebras.

\end{document}